\documentclass[a4paper,12pt]{article}

\usepackage{pstricks}
\usepackage{graphicx,psfrag}
\usepackage{amsmath}
\usepackage{amssymb}
\usepackage{amsthm}
\usepackage{color}

\textwidth=160mm \oddsidemargin=0mm \evensidemargin=0mm
\tolerance=8400

                \newcommand {\bt}  {\beta}
                
                \newcommand {\Del} {\Delta}
              \newcommand {\ve}   {\varepsilon}

      \newcommand {\HHH}    {\mathbb{H}}       
      \newcommand {\RRR}  {{\mathbb R}}

        \newcommand {\CCC}  {\mathbb{C}}     \newcommand {\zzz}  {\zeta}  
                                           
     \newcommand {\beq}  {\begin{equation}}  \newcommand {\beqo}  {\begin{equation*}}
      \newcommand {\eeq}  {\end{equation}}   \newcommand {\eeqo}  {\end{equation*}}

     \newcommand {\Hb} {\HHH_{\rm b}}    \newcommand {\Hs} {\HHH_{\rm s}}
     \newcommand {\Hbs} {\HHH_{\rm b\rightarrow s}}    \newcommand {\Hsb} {\HHH_{\rm s\to b}}
\newcommand {\Hg} {\HHH_{\rm g}}

      \newtheorem{theorem}{Theorem}
      \newtheorem{lemma}{Lemma}
      
      \newtheorem{zam}{Remark}
      \newtheorem{opr}{Definition}
      \newtheorem{cor}{Corollary}

\title{Rotating rod and ball}

\author{Sergey Kryzhevich \thanks{Institute of Applied Mathematics, Faculty of Applied Physics and Mathematics, Gda\'nsk University of Technology, Gda\'nsk, Poland and BioTechMed Center, Gda\'nsk University of Technology, Gda\'nsk, Poland, serkryzh@pg.edu.pl} \and Alexander Plakhov\thanks{Center for R{\&}D in Mathematics and Applications, Department of Mathematics, University of Aveiro, Portugal and Institute for Information Transmission Problems, Moscow, Russia, plakhov@ua.pt}}

\begin{document}
\maketitle


\begin{abstract}
We consider a mechanical system consisting of an infinite rod (a straight line) and a ball (a massless point) on the plane. The rod rotates uniformly around one of its points. The ball is reflected elastically when colliding with the rod and moves freely between consecutive hits. A sliding motion along the rod is also allowed. We prove the existence and uniqueness of the motion with a given position and velocity at a certain time instant. We prove that only 5 kinds of motion are possible: a billiard motion; a sliding motion; a billiard motion followed by sliding; a sliding motion followed by a billiard one; and a constant motion when the ball is at the center of rotation. The asymptotic behaviors of time intervals between consecutive hits and of distances between the points of hits on the rod are determined.
\end{abstract}

\begin{quote}
{\small {\bf Mathematics subject classifications:} 52A15, 26B25, 49Q10}
\end{quote}

\begin{quote}
{\small {\bf Keywords and phrases:}
billiard with moving boundary, rotating rod, elastic impact, grazing, sliding}
\end{quote}

\section{Introduction}
In this paper, we study the billiard in a rotating half-plane.

There has been a considerable amount of work concerning time-dependent billiards. The main motivation for this work was concerned with studying mathematical models of Fermi acceleration.
To a large extent, the research was focused on the question, of whever the energy of the billiard ball can grow to infinity as a result of multiple collisions with moving boundaries. Some papers studied billiard in oscillating domains \cite{BreathingCircle,Car,Dol,Gel2008,Gel1,Gel2,INei, Los}, while others \cite{rotSquare,Burdzy, rotOval, rot1,rot2} were concerned with billiard in rotating domains. This list of course is far from being complete.

The motivation for our study, however, comes from a different physical model going back to Isaac Newton. In the second book of his {\it Principia} \cite{N} he studied the problem of least resistance for a body moving in a medium composed of point particles. The medium is extremely rare, so as mutual interaction of particles can be neglected, and collisions of the particles with the body are perfectly elastic.
 Newton considered the problem in the class of convex axisymmetric bodies with fixed length and width. 

Starting from the early 1990s, many papers concerning the problem of minimal resistance in various classes of bodies have appeared (see, e.g., \cite{BrFK, BFK,BK,BG97, LO,LP1,canad,PT,SIREV}, among others). It is generally assumed in these papers that the body translates in the medium; see, however, the papers \cite{Kr,ARMA,PTG,OMT}, where a combination of translational and rotational motions is considered.

To the best of our knowledge, a regular study concerning the free motion of a body (involving both translation and rotation) in the framework of Newtonian aerodynamics has never been carried out, even in the 2D case. Theorems of existence and uniqueness for the dynamics have not been obtained, and free motion on the plane of special shapes, even the simplest ones, such as an ellipse, a square, or a line segment, has never been studied.\footnote{The only exception is the disk, whose dynamics is trivial: its motion is rectilinear, and its scalar velocity satisfies a differential equation $\dot v = -c v^2$.}

Here we concentrate on the case which seems to be the simplest one: a line segment. By simplifying further the problem, we assume that the ball and the length of the segment are infinite. Since the medium particles do not mutually interact, it suffices to consider the interaction of a single particle with the segment. Thus, we come to a mechanical system composed of a ball (a mass point) and a rod (a straight line) rotating uniformly about a fixed point of the rod.  When colliding with the rod, the ball is reflected elastically. The problem is to describe the dynamics of the ball.

Thus, we have the billiard in a half-plane rotating uniformly about a fixed point on its boundary. Apparently, such a system has never been studied. It looks quite simple, but its study is far from being trivial, as will be seen in this paper later on.

\begin{figure}[!ht]
\begin{center}
\includegraphics[height=2.5in]{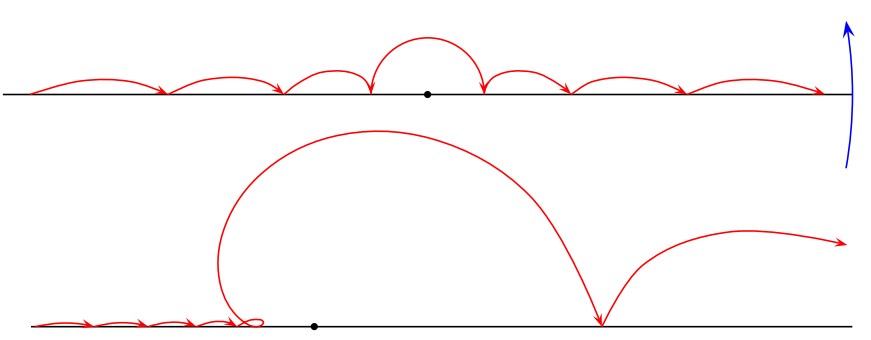}
\caption{Two trajectories of the ball in the rotating coordinate system. The rod rotates counterclockwise. In the second trajectory, there is a grazing impact.}
\end{center}
\end{figure}

\begin{figure}[!ht]
\begin{center}
\includegraphics[height=2.5in]{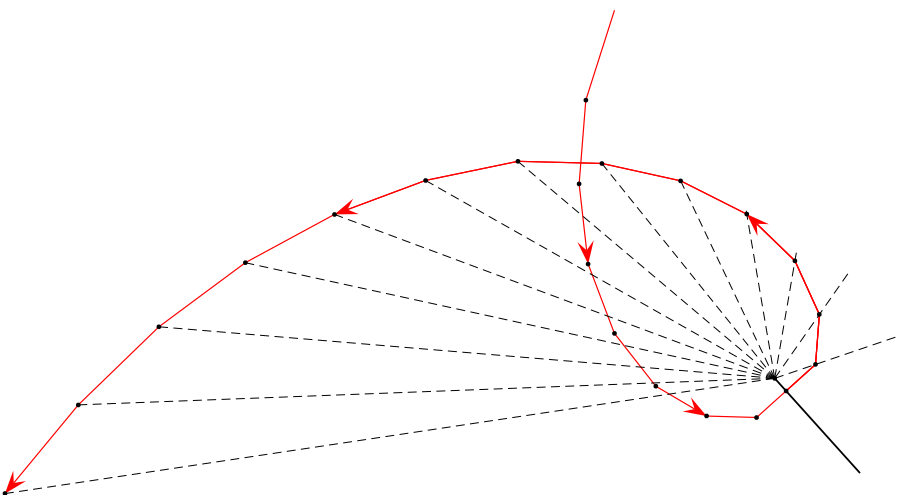}
\caption{A trajectory of the ball in a resting coordinate system.}
\end{center}
\end{figure}

This article is a continuation of an earlier work published in \cite{KrPl1}.

Without loss of generality assume that the angular velocity of the rod equals 1 and the rotation is counterclockwise. It is convenient to consider the dynamics in the rotating coordinate system on the complex plane $\mathbb{C}$ (see Figures 1 and 2), where the rod is represented by the real axis $\RRR$, with the fixed point being at the origin, and the position of the ball $\zzz(t)$ at the time instant $t \in \mathbb{R}$ is always contained in the closed upper half-plane $\mathbb{C}_0^+ := \{ z \in \mathbb{C} : \text{Im}\, z \ge 0 \}$.

We also denote the open upper half-plane as $\mathbb{C}^+ := \{ z \in \mathbb{C} : \text{Im}\, z > 0 \}$.

According to Newton's second law, the acceleration of the ball times its mass is equal to the force acting on it. Direct calculation shows that the acceleration (that is, the second derivative of the ball's position) in the rotating coordinate system equals $\ddot\zzz + 2i\dot\zzz - \zzz$, where dot means the derivative with respect to $t$. It is natural to consider this function as a distribution since the force becomes infinite at the instants of hits. The force equals zero, when the ball is located outside the rod, is orthogonal to the rod, and points inward to the upper half-plane, when the ball is on the rod. In the rotating coordinate system, this can be written down as follows,
\beq\label{force}
\ddot\zzz + 2i\dot\zzz - \zzz\
\left\{ \begin{array}{ll} = 0, & \text{if} \  \zzz \in \CCC^+;\\ \in i\RRR^+_0, & \text{if} \  \zzz \in \RRR.
\end{array} \right.
\eeq
Equation \eqref{force} should be understood in terms of distributions. The function $\zzz$ is continuous.
The first line in this equation means that if a test function (a compactly supported $C^\infty$ function of one variable) $\phi$ is supported in the open set $\{ t :\, \zzz(t) \in \CCC^+ \}$, then  the value of the functional $\ddot\zzz + 2i\dot\zzz - \zzz$ at $\phi$ equals zero,
$\langle \ddot\zzz + 2i\dot\zzz - \zzz\,,\, \phi \rangle = \int_{\RRR} (\ddot\phi - 2i\dot\phi - \phi)\, \zzz\, dt = 0.$
The second line means that if the test function is nonnegative, $\phi \ge 0$, then
$ \int_{\RRR} (\ddot\phi - 2i\dot\phi - \phi)\, \zzz\, dt \in i\RRR_0^+.$


We additionally assume that the ball reflects elastically when hitting the real line, that is,
\beq\label{one-sided}
\text{ if \, $\zzz(t) \in \RRR$ \, then} \quad \dot\zzz(t+0) = \dot\zzz(t-0)^*;
\eeq
here asterisk means the complex conjugation.
If $\dot\zzz(t-0) \in \RRR$ in \eqref{one-sided}, and therefore, the function $\zzz$ is differentiable at $t$,
$\dot\zzz(t) = \dot\zzz(t+0) = \dot\zzz(t-0)$, and $\zzz \in \CCC^+$ in a punctured neighborhood of $t$, then we say that a {\it grazing impact} takes place at the instant $t$.

Taking account of \eqref{one-sided}, it is natural to introduce an equivalence relation as follows: if $x \in \RRR$ then $(x, z) \sim (x, z^*)$. In other words, $(z_1, z_2)$ and $(w_1, w_2)$ are equivalent, {\it iff} $w_1 = z_1 \in \RRR$ and $w_2 = z_2^*$ (we identify the coordinates of the ball immediately before the impact and immediately after it).
Denote $\HHH := (\CCC^+_0 \times \CCC)/\sim$.

\begin{opr}
Consider a continuous function $\zzz(t),\, t \in \RRR$ taking values in $\CCC_0^+$ such that

(a) equation \eqref{force} is satisfied in the sense of distributions;

(b)  
 if $\zzz(t) \in \RRR$, there exist the one-sided derivatives of $\zzz$ satisfying $\dot\zzz(t+0) = \dot\zzz(t-0)^*.$\\
Slightly abusing the notation, the class of equivalence $\big\{  (\zzz(t),\, \dot\zzz(t-0)),\ (\zzz(t),\, \dot\zzz(t+0))  \big \} \in \HHH$ is denoted as $(\zzz(t),\, \dot\zzz(t))$.

The map $t \mapsto (\zzz(t),\, \dot\zzz(t))$ from $\RRR$ to $\HHH$ is called a {\rm motion}.

Likewise, a map defined on a (finite or semi-infinite) interval $I \subset \RRR$ satisfying (a) and (b) will also be called a motion.
If the set $\{ t \in I:\, \zzz(t) \in \RRR \}$ is locally finite, the motion is called a {\rm billiard motion}, and if $\zzz(I) \subset \RRR$, it is called a {\rm sliding motion}.
\end{opr}

\begin{zam}\label{z invert}
Note that if $(\zzz(t), \dot\zzz(t))$ is a motion, then the function obtained by time reversion and symmetries with respect to the imaginary and real axes, $(-\zzz(-t)^*, \dot\zzz(-t)^*)$, is also a motion.
\end{zam}

In the case of billiard motion there is a locally finite sequence of instants of hits $\ldots < t_{n-1} < t_n < t_{n+1} < \ldots$, so as $\zzz(t_n) \in \RRR$, and $\zzz(t)$ lies in $\CCC^+$ and satisfies $\ddot\zzz + 2i\dot\zzz - \zzz\ = 0$ for $t \in (t_n,\, t_{n+1})$. Solving this equation, one concludes that the position of the ball between consecutive hits is described by
\begin{equation}\label{e1}
\zzz(t) = (z + vt) e^{-it} \in \CCC^+, \qquad \text{where} \ \ z,\, v \in \CCC.
\end{equation}

In the case of sliding the ball moves along the rod, that is, $\zzz \in \RRR$, and $\zzz$ satisfies the equations
\beq\label{e2}
\ddot\zzz - \zzz = 0, \qquad \dot\zzz \ge 0.
\eeq
Solving them, one obtains all the possible kinds of sliding motion on intervals: (i) $\zzz(t) = k\sinh t$,\, $t \in \RRR$; (ii) $\zzz(t) = ke^t$ or $-ke^{-t}$,\, $t \in \RRR$; (iii) $\zzz(t) = k\cosh t$,\, $t \ge 0$ or $-k\cosh t$,\, $t \le 0$, where $k > 0$ is a constant; (iv) $\zzz(t) = 0$,\, $t \in \RRR$, and functions obtained from these ones by time shifts.

As will be seen below, all motions are either billiard or sliding ones or their combinations: a billiard motion followed by sliding or vice versa (mixed motions). Additionally, sliding and mixed motions are extremely rare events: the corresponding trajectories occupy a set of codimension 2 in the phase space $\HHH$.

Denote
$$
\Hb := (\CCC^+ \times \CCC) \cup \big( (\RRR \times (\CCC \setminus \RRR)) /\sim \big) \cup (\RRR \times \RRR^-),
$$ $$
\Hs := \RRR \times \RRR^+, \quad \Hbs :=\RRR^+ \times \{ 0 \}, \quad \Hsb := \RRR^- \times \{ 0 \}, \quad \Hg := \RRR \times \RRR^- \subset \Hb.
$$
The subscript "b" means billiard, "s" means sliding, "g" means grazing, and "$b\to s$",\, "$s \to b$" means switching from billiard motion to sliding one and vice versa. The entire phase space can be represented as the disjoint union
$$
\HHH = \Hb \cup \Hs \cup \Hbs \cup \Hsb \cup \{ (0,0) \}.
$$
This notation is explained by the following remark and lemma. 

\begin{zam} In other words, we claim that the considered system exhibits three types of motion.
\begin{enumerate}
\item The fixed point where the ball stands at the fixed point of the rod (point $(0,0)$).
\item The billiard motion of the ball with instantaneous collisions with the rod. This motion corresponds to the domain $\Hb$ and includes a) free motion  with no interaction with the rod $(\CCC^+ \times \CCC)$; b) impacts with non-zero normal velocity $(\RRR \times (\CCC \setminus \RRR))$; c) grazing where the normal velocity at the impact is zero but the impact is isolated in time $(\RRR \times \RRR^-)$.   
\item Sliding regime where the ball moves along the rod and does not leave it $(\Hs)$.
\end{enumerate}
Transitions between the two latter regimes are possible and happen from billiard to sliding mode at the points of the set $\Hbs$ and vice versa at points of the set $\Hsb$.
\end{zam}

\begin{lemma}\label{l notation}
For any $(z, \check{z}) \in \HHH$ there exist $\ve > 0$ and a motion $(\zzz(t), \dot\zzz(t)),\, t \in (-\ve,\, \ve)$ satisfying $(\zzz(0), \dot\zzz(0)) = (z, \check{z})$ such that

(a) if $(z, \check{z}) \in \Hb$, then $\zzz(t) \in \CCC^+$ for $t \in (-\ve,\, 0) \cup (0,\, \ve)$ (billiard motion);

(b) if $(z, \check{z}) \in \Hs \cup (0,0)$, then $\zzz(t) \in \RRR$ for $t \in (-\ve,\, 0) \cup (0,\, \ve)$ (sliding);

(c) if $(z, \check{z}) \in \Hbs$, then $\zzz(t) \in \CCC^+$ for $t \in (-\ve,\, 0)$ and $\zzz(t) \in \RRR$ for $t \in (0,\, \ve)$ (transi-\\ tion from billiard motion to sliding);

(d) if $(z, \check{z}) \in \Hsb$, then $\zzz(t) \in \RRR$ for $t \in (-\ve,\, 0)$ and $\zzz(t) \in \CCC^+$ for $t \in (0,\, \ve)$ (transi-\\ tion from sliding to billiard motion).
\end{lemma}

\begin{proof}
Let $(z, \check{z}) \in \CCC^+ \times \CCC$; then we are looking for the motion in the form \eqref{e1}. One easily checks that the function $\zzz(t) = [ z + (iz + \check{z})t ] e^{-it}$ satisfies the initial conditions, and therefore, is a solution, and $\zzz(t) \in \CCC^+$ for $t \in (-\ve,\, \ve)$ if $\ve > 0$ is sufficiently small.

Let $(z, \check{z}) = \big(\, (r, x-iy), \, (r, x+iy) \,\big)\in \big( (\RRR \times (\CCC \setminus \RRR)) /\sim \big)$,\, $r \in \RRR,\, y > 0$. We look for the solutions in the form \eqref{e1} for $t < 0$ and $t > 0$ separately, with the initial conditions $\zzz(0) = r,\, \dot\zzz(0^-) = x-iy,\, \dot\zzz(0^+) = x+iy$. One obtains $\zzz(t) = [r + xt + i(r - y)t ] e^{-it}$ for $t < 0$ and $\zzz(t) = [r + xt + i(r + y)t ] e^{-it}$ for $t > 0$. The left and right derivatives of Im$\,\zzz$ at 0 are\, Im$\,\dot\zzz(0^-) = -y < 0$ and Im$\,\dot\zzz(0^+) = y > 0$, hence $\zzz(t) \in \CCC^+$ for $t \in (-\ve,\, 0) \cup  (0,\, \ve)$, provided that $\ve > 0$ is sufficiently small.

Let $(z, \check{z}) = (r, x) \in \RRR \times \RRR^-$. Again, looking for the solution in the form \eqref{e1}, we obtain $\zzz(t) = (r + xt + irt) e^{-it}$. Since Im$\,\zzz(0) = 0$,\, Im$\,\dot\zzz(0) = 0$, and Im$\,\ddot\zzz(0) = -2x > 0$, we have $\zzz(t) \in \CCC^+$ for $t \in (-\ve,\, 0) \cup  (0,\, \ve)$ with $\ve > 0$ sufficiently small.

Let $(z, \check{z}) = (r, x) \in \Hs = \RRR \times \RRR^+$. Here $\zzz$ should satisfy equations \eqref{e2}.  Consider several cases.

(i) $x > |r|$. We are looking for a solution in the form $\zzz(t) = k\sinh(t+t_*)$. The initial conditions then take the form $k\sinh t_* = r,\, k\cosh t_* = x$ or, equivalently, $ke^{t^*} = \dfrac{x+r}{2}$,\, $ke^{-t^*} = \dfrac{x-r}{2}$,\, $k > 0$. These equations have a unique solution $k,\, t_*$, and the corresponding motion $\zzz(t), \dot\zzz(t)$ is defined for $t \in \RRR.$

(ii) $r = x$ or $r = -x$. The corresponding solutions are $\zzz(t) = xe^{t}$ or $\zzz(t) = -xe^{-t}$,\, $t \in \RRR$, respectively.

(iii) $0 < x < r$. Here we are looking for a solution in the form $\zzz(t) = k\cosh(t+t_*)$,\, $k > 0$,\, $t \ge -t_*$. The initial conditions are $k\cosh t_* = r,\, k\sinh t_* = x$ or, equivalently, $ke^{t^*} = \dfrac{r+x}{2}$,\, $ke^{-t^*} = \dfrac{r-x}{2}$,\, $k > 0$. These equations have a unique solution $k,\, t_*$, besides $t_* > 0$. Thus, one can take $\ve = t_*$.

(iv) $0 < x < -r$. Similarly to the previous case, we look for the solution in the form $\zzz(t) = -k\cosh(t-t_*),\, t \le t_*$. The initial conditions can be equivalently written as $ke^{t^*} = \dfrac{-r+x}{2}$,\, $ke^{-t^*} = \dfrac{-r-x}{2}$,\, $k > 0$. There is a unique solution $k,\, t_*$ of these equations, with $t_* > 0$. Thus, one can also take $\ve = t_*$.

Let $(z, \check{z}) = (r, 0) \in \Hbs = \RRR^+ \times \{ 0 \}$. The solution in the form \eqref{e1} with the initial conditions $\zzz(0) = r,\, \dot\zzz(0) = 0$ is $\zzz(t) = r(1 + it) e^{-it}$, and so, Im$\,\zzz(t) = r(t\cos t - \sin t) > 0$ for $-\pi/2 < t < 0$. The solution of equations \eqref{e2} with the same initial conditions is $\zzz(t) = r\cosh t,\, t \ge 0$. Thus, here we can take $\ve = \pi/2$.

Let $(z, \check{z}) = (r, 0) \in \Hsb = \RRR^- \times \{ 0 \}$. In a similar way one finds the solution $\zzz(t) = r\cosh t$ for $t \le 0$ and $\zzz(t) = r(1 + it) e^{-it}$ for $0 < t < \pi/2$. Here, again, we have $\ve = \pi/2$.

Finally, if $(z, \check{z}) = (0, 0)$, the solution has the form $\zzz(t) = 0,\, t \in \RRR$.
\end{proof}

The following lemma is, in a sense, inverse to Lemma \ref{l notation}. Its proof is similar to the proof of Lemma \ref{l notation} and therefore is left to the reader.

\begin{lemma}\label{l not2}
Let $(z, \check{z}) \in \HHH$ and $\ve > 0$. Consider a motion $(\zzz(t), \dot\zzz(t))$ defined in a one-sided or two-sided neighborhood of 0 satisfying $(\zzz(0), \dot\zzz(0)) = (z, \check{z})$.

(a) If $\zzz(t) \in \CCC^+$ for $t \in (-\ve,\, 0)$ then $(z, \check{z}) \in \Hb \cup \Hbs$.

(b) If $\zzz(t) \in \CCC^+$ for $t \in (0,\, \ve)$ then $(z, \check{z}) \in \Hb \cup \Hsb$.

(c) If $\zzz(t) \in \RRR$ for $t \in (-\ve,\, 0)$ then $(z, \check{z}) \in \Hs \cup \Hsb \cup \{ (0,0) \}$.

(d) If $\zzz(t) \in \RRR$ for $t \in (0,\, \ve)$ then $(z, \check{z}) \in \Hs \cup \Hbs \cup \{ (0,0) \}$.

\end{lemma}

The main results of this paper are contained in the following theorem.

\begin{theorem}\label{t1}
For any $(z, \check{z}) \in \HHH$ there exists a unique motion $(\zzz(t), \dot\zzz(t)),\, t \in \RRR$, satisfying $(\zzz(0), \dot\zzz(0)) = (z, \check{z})$. There are 5 kinds of motion: (I) billiard motion when $(\zzz(t), \dot\zzz(t)) \in \Hb$ for all $t \in \RRR$; (II) sliding motion, when $(\zzz(t), \dot\zzz(t)) \in \Hs$ for all $t \in \RRR$; (III) transitional motion from billiard to sliding, when $(\zzz(t), \dot\zzz(t))$ lie in $\Hb$ for $t < t_*$, in $\Hs$ for $t > t_*$, and in $\Hbs$ for $t = t_*$; (IV) transitional motion from sliding to billiard, when $(\zzz(t), \dot\zzz(t))$ lie in $\Hs$ for $t < t_*$, in $\Hb$ for $t > t_*$, and in $\Hsb$ for $t = t_*$; (V) $(\zzz(t), \dot\zzz(t)) = (0,0)$ for $t \in \RRR$.

In the case (I) of billiard motion, the following holds.

(a) there are infinitely many instants of hits $\ldots < t_{-2} < t_{-1} < t_0 < t_1 < t_2 < \ldots$ going to infinity in both sides, $\lim_{n\to+\infty} t_n = +\infty$ and $\lim_{n\to-\infty} t_n = -\infty$.

(b) The sequence $\{ r_n = \zzz(t_n) \} \subset \RRR$ is strictly monotone increasing, and takes both negative and positive values, and additionally,
$$
\dfrac{r_{n+1}}{r_n} = 1 + \dfrac{3}{2n} (1 + o(1)) \quad \text{as} \ \ n \to \pm\infty.
$$
It follows that $r_n = \pm |n|^{3/2+o(1)}$ as $n \to \pm\infty$.

(c) One has
$$
{\rm Re}\, \dot\zzz(t_n) = |r_n|\, (1 + o(1)), \quad {\rm Im}\, \dot\zzz(t_n+0) = \frac{3}{2n}\, |r_n|\, (1 + o(1))
$$ $$
\max \{ {\rm Im}\, \zzz(t),\, t_n \le t \le t_{n+1} \} = \frac{9}{16n^2}\, |r_n|\, (1 + o(1)) \quad \text{as} \ \ n \to \pm\infty.
$$

In what follows, without loss of generality assume that $\zzz(t_n) < 0$ for $n \le 0$ and $\zzz(t_n) \ge 0$ for $n \ge 1$, and denote $\tau_n := t_{n+1} - t_n$.

(d) The sequence $\{ \tau_n,\, n \le -1 \}$ is monotone increasing, and the sequence $\{ \tau_n,\, n \ge 1 \}$ is monotone decreasing, with
$$
\tau_n = \dfrac{3}{2|n|}\, (1 + o(1)) \quad \text{as} \ \ n \to \infty.
$$
It follows that $t_n = \pm \dfrac{3}{2} \ln|n|\, (1 + o(1))$ as $n \to \pm\infty.$

(e) There is at most one grazing impact $t_n$, and in the case of grazing, there are no other points of impact in $[0,\, \zzz(t_n)]$, if $\zzz(t_n) > 0$, or in $[\zzz(t_n),\, 0]$, if $\zzz(t_n) < 0$.

In the case (II) there is sliding motion, and $\zzz(t)$ takes one of the forms  $k\sinh(t-t_*)$;\ $ke^{t}$;\ $ke^{-t}$, with $k > 0$ and $t_* \in \RRR$.

In the case (III), for $t < t_*$ there is billiard motion, with the sequences of instants of hits $\ldots < t_{-2} < t_{-1} < t_0$ and of points of impact $\{ r_n = \zzz(t_n),\ n = \ldots, {-2}, {-1}, 0 \} \subset \RRR^-$ satisfying conditions (a)--(d) with corresponding modifications. Grazing does not occur. For $t > t_*$ there is sliding motion in the form $\zzz(t) = k\cosh(t-t_*) \subset \RRR^+$ with $k > 0$.

In the case (IV), for $t < t_*$ there is sliding motion in the form $\zzz(t) = -k\cosh(t-t_*) \subset \RRR^-$ with $k > 0$, while for $t > t_*$ there is billiard motion, with the sequences of instants of hits $t_1 < t_2 < \ldots$ and of points of impact $\{ r_n = \zzz(t_n), \ n = 1,\, 2, \ldots \} \subset \RRR^+$ satisfying conditions (a)--(d) with corresponding modifications. Grazing does not occur.
\end{theorem}

\begin{zam}
The dimension of the phase space $\HHH$ is 4. Theorem \ref{t1} implies that the set of points included in a full billiard motion has full dimension. The set of points included in a billiard motion with grazing has dimension 3, that is, its codimension is 1. Finally, the sets of points included in a mixed motion or in a sliding motion have dimension 2, that is, their codimension equals 2.
\end{zam}

\begin{zam}
Note that our earlier work \cite{KrPl1} contains a part of the results of Theorem \ref{t1}. Namely, only billiard motion (with sliding excluded) with positive time, when the first reflection is from the positive semiaxis, is considered there. It is proved that the sequence $r_n$ is monotone increasing and its growth is sub-exponential and that $\sum \tau_n = \infty$, that is, there is no chattering (infinitely many impacts of a motion in a finite interval of time).
\end{zam}

\section{Basic formulas}

Here we derive some useful formulas concerning billiard motions $(\zzz, \dot\zzz)$, which will be needed later on.

Let $t_n < t_{n+1}$ and assume that $\zzz(t_{n})$ and $\zzz(t_{n+1})$ lie in $\RRR$ and $\zzz(t) \in \CCC^+$ in the interval $t \in (t_n,\, t_{n+1}).$
In this interval we have $\zzz(t+t_n) = (r_n + v_n t) e^{-it}$, where $r_n = \zzz(t_n) \in \RRR$ and $v_n \in \CCC.$

Denote $\dot\zzz(t_n + 0) := x_n + iy_n$, that is, $x_n = \text{Re}\,\dot\zzz(t_n + 0)$ and $y_n = \text{Im}\,\dot\zzz(t_n + 0) \ge 0$. We have
\beq\label{zt}
\zzz(t_n + t) = [r_n + x_n t + i(r_n + y_n)t] e^{-it},
\eeq
and so, Im$\,\zzz(t_n+t) = (r_n + y_n)t \cos t - (r_n + x_n t) \sin t.$ The time interval between two consecutive hits is denoted as $\tau_n := t_{n+1} - t_n$; it is the smallest positive value satisfying
\beq\label{delta}
(r_n + y_n) \tau_n \cos\tau_n - (r_n + x_n\tau_n)\sin\tau_n = 0.
\eeq
Denote $g_n(t) := (r_n + y_n) t \cos t - (r_n + x_n t)\sin t$; the function $g_n$ is positive in $(0,\, \tau_n)$ and equals zero at $t = \tau_n$.

Since $g_n(2\pi) = -2 g_n(\pi)$, the function $g_n$ has at least one zero in $[\pi,\, 2\pi)$. It follows that $0 < \tau_n < 2\pi$.

Further, denote $r_{n+1} := \zzz(t_{n+1}) \in \RRR$,\, $x_{n+1} + iy_{n+1} := \dot\zzz(t_{n+1} -0)^* = \dot\zzz(t_{n+1} +0) \in \CCC^+_0$. Using \eqref{zt} and \eqref{delta} and assuming that $\tau_n \ne \pi$, we obtain
\beqo\label{rnrn}
r_{n+1} = \zzz(t_n + \tau_n) = \text{Re}\,\zzz(t_n + \tau_n) = (r_n + x_n \tau_n) \cos \tau_n + (r_n + y_n) \tau_n \sin \tau_n
\eeqo
$$
= (r_n + y_n) \tau_n \Big( \frac{\cos^2 \tau_n}{\sin\tau_n} + \sin \tau_n \Big),
$$
hence
\beq\label{rn2}
r_{n+1} = \frac{( r_n + y_n)\tau_n}{\sin\tau_n}.
\eeq
Using \eqref{delta}, one derives an alternative formula
\beq\label{rn3}
r_{n+1} = \frac{r_n + x_n \tau_n}{\cos\tau_n},
\eeq
which is valid when $\tau_n \ne \pi/2,\, 3\pi/2$.

Further, using \eqref{delta} and assuming that $\tau_n \ne \pi$, we get
$$
x_{n+1} = \text{Re}\, \dot\zzz(t_{n} +\tau_n -0) = \big[ x_n + (r_n + y_n)\tau_n \big] \cos\tau_n + (y_n - x_n \tau_n) \sin\tau_n
$$ $$
= x_n \cos\tau_n + (r_n + x_n\tau_n)\sin\tau_n + (y_n - x_n \tau_n) \sin\tau_n = x_n \cos\tau_n + (r_n + y_n) \sin\tau_n
$$ $$
=  x_n \cos\tau_n + \frac{(r_n + x_n\tau_n)\sin^2 \tau_n}{\tau_n \cos\tau_n} = \frac{x_n \tau_n + r_n \sin^2 \tau_n}{\tau_n \cos\tau_n}
= \frac{r_n + y_n}{\sin\tau_n} - \frac{r_n \cos\tau_n}{\tau_n},$$
and so,
\beq\label{e-x}
x_{n+1} = \frac{r_n + y_n}{\sin\tau_n} - \frac{r_n \cos\tau_n}{\tau_n}.
\eeq
There is an alternative formula, which is valid when $\tau_n \ne \pi/2,\, 3\pi/2$,
\beq\label{e-x1}
x_{n+1} = \frac{r_n + x_n \tau_n}{\tau_n \cos\tau_n} - \frac{r_n \cos\tau_n}{\tau_n}.
\eeq

Using \eqref{delta} and assuming that $\tau_n \ne \pi$, one obtains
$$
y_{n+1} = -\text{Im}\, \dot\zzz(t_{n} +\tau_n -0) = \big[ x_n + (r_n + y_n)\tau_n \big] \sin\tau_n - (y_n - x_n \tau_n) \cos\tau_n $$ $$
= \frac{(r_n + y_n)\tau_n \cos\tau_n - r_n \sin\tau_n}{\tau_n} + (r_n + y_n)\tau_n \sin\tau_n - (y_n - x_n \tau_n) \cos\tau_n $$ $$
=  (r_n + x_n \tau_n) \cos\tau_n - \frac{r_n \sin\tau_n}{\tau_n} + (r_n + y_n)\tau_n \sin\tau_n $$ $$
= \frac{(r_n + y_n) \tau_n \cos^2 \tau_n}{\sin\tau_n} + (r_n + y_n)\tau_n \sin\tau_n - r_n\, \frac{\sin\tau_n}{\tau_n},
$$
thus
\beq\label{e-y}
y_{n+1}  = (r_n + y_n)\, \frac{\tau_n}{\sin\tau_n} - r_n\, \frac{\sin\tau_n}{\tau_n}.
\eeq
Again, since for $\tau_n = \pi$, this formula is not valid, we propose an alternative formula, which holds for $\tau_n \ne \pi/2,\, 3\pi/2$,
\beq\label{e-y1}
y_{n+1}  = \frac{r_n + x_n \tau_n}{\cos\tau_n} - r_n\, \frac{\sin\tau_n}{\tau_n}.
\eeq

Let us derive some additional useful formulas. Suppose that $r_n \ne 0$ and denote $w_n = a_n + ib_n := v_n/r_n$, so as
\beq\label{anbn}
\zzz(t_n + t) = r_n (1 + w_n t) e^{-it} = r_n (1 + a_n t + ib_n t) e^{-it} \qquad \text{for} \ \ t \in (0,\, \tau_n).
\eeq
One has $a_n = x_n/r_n$ and $b_n = (r_n + y_n)/r_n$. 

The descriptions of dynamics in terms of $(x_n,\, y_n)$ and in terms of $(a_n,\, b_n)$ are equivalent. In proofs of various statements, one or the other description is more convenient.

Using \eqref{rn2}, \eqref{e-x}, and \eqref{e-y}, one obtains the following iterative formulas for $r_n$,\, $a_n$, and $b_n$,
\beq\label{rn4}
r_{n+1} = \frac{b_n \tau_n}{\sin\tau_n}\, r_n,
\eeq
\beq\label{iterA}
a_{n+1} = \frac{1}{\tau_n} - \frac{\cos\tau_n \sin\tau_n}{b_n \tau_n^2},
\eeq
\beq\label{iterB}
b_{n+1} = 2 - \frac{1}{b_n} \Big( \frac{\sin\tau_n}{\tau_n} \Big)^2.
\eeq
Equation \eqref{delta} can be rewritten as
\beq\label{del2}
b_n \tau_n \cos\tau_n - (1 + a_n \tau_n) \sin\tau_n = 0.
\eeq
Using \eqref{del2} and \eqref{iterA}, one derives the following useful formula,
\beq\label{iterAA}
\frac{1}{a_{n+1}} = \tau_n + \frac{1}{a_n + b_n \tan \tau_n}.
\eeq

\section{Proof of Theorem \ref{t1}}

\subsection{Billiard motion}

In this subsection, we assume that $(z, \check{z}) \in \Hb \cup \Hsb$. The latter means that $(z,\check{z}) \not\in (\RRR \times \RRR^+) \cup (\RRR_0^+ \times \{0\})$. Let $t_* \in \RRR$. We are looking for a motion $(\zzz(t), \dot\zzz(t))$, $t \ge t_*$ with the initial condition $(\zzz(t_*), \dot\zzz(t_*)) = (z, \check{z})$.

\begin{lemma}\label{l ini}
There exist a value $t' > t_*$ and a motion $(\zzz(t), \dot\zzz(t))$ on $[t_*,\, t']$  with the initial conditions $(\zzz(t_*), \dot\zzz(t_*)) = (z, \check{z})$ such that

(a) $\zzz(t) \in \CCC^+$ for $t \in (t_*,\, t')$,\, $\zzz(t') \in \RRR$, and $(\zzz(t'), \dot\zzz(t')) \in \Hb \cup \Hbs.$

(b) If $z \in \RRR^+$ then $t' - t_* < \pi$, and if $z = 0$ then $t' - t_* \le \pi$.

(c)    
If $(z, \check{z}) \in \Hsb$ then $\zzz(t') > 0$ and $(\zzz(t'), \dot\zzz(t')) \in \Hb \setminus \Hg$.

(d) If $(\zzz(t'), \dot\zzz(t')) \in \Hg \cup \Hbs$ then $z \not\in \RRR^+_0$.
\end{lemma}

By definition, $t'$ is the smallest instant of hit greater than $t_*.$
Claim (c) means that if there is a passage from sliding to billiard motion at $t_*$, then the consecutive hits at $t_*$ and $t'$ are from the negative and positive semiaxes, respectively, and there is no grazing or passage to sliding at $t'$.
Claim (d) means that a grazing impact or passage from billiard motion to sliding cannot occur after a reflection from the positive semiaxis
    or from the origin.

\begin{proof} (a) By claims (a) and (d) of Lemma \ref{l notation}, there is a motion $(\zzz, \dot\zzz)$ in a right neighborhood of $t_*$ with the given initial conditions such that $z(t_*+t) \in \CCC^+$ for $t \in (0,\, \ve)$. The motion can be uniquely defined by the formula \eqref{e1} and by the initial conditions. Let $t' > t_*$ be the smallest value such that $\zzz(t') \in \RRR$, that is, $t' := \inf \{ t \ge t_* :\, \zzz(t) \in \RRR \}.$ We have $t' < t_* + 2\pi.$
By claim (a) of Lemma \ref{l not2}, $(\zzz(t'), \dot\zzz(t')) \in \Hb \cup \Hbs.$

(b) Denote $z =: r_n \in \RRR^+_0$,\, $\check{z} = \{ x_n - iy_n,\, x_n + iy_n \}$ with $y_n \ge 0$, and $t'-t_* =: \tau_n$. Recall that
$$g(\tau) = g_n(\tau) = (r_n + y_n)\tau \cos\tau - (r_n + x_n \tau) \sin\tau;
$$
$g(\tau) > 0$ for $\tau > 0$ sufficiently small, and $\tau_n$ is the smallest positive zero of $g$. We have $g(\pi) = -\pi (r_n + y_n)$. If $r_n > 0$ then $g(\pi) < 0$, and $\tau_n < \pi.$ If $r_n = 0$ then $g(\pi) \le 0$, and $\tau_n \le \pi.$

(c) Let $(z, \check{z}) \in \Hsb$, that is, $z = \zzz(t_*) \in \RRR^-$ and $\check{z} = \dot\zzz(t_*) = 0$, and
$$\zzz(t_* + t) = z(1 + it)e^{-it}.
$$
The value $\tau_n = t' - t_*$ is the smallest positive value satisfying $\tau_n \cos\tau_n - \sin\tau_n = 0$. One easily sees that $\pi < \tau_n < 3\pi/2$. One has $\zzz(t') = z(\cos\tau_n + \tau_n \sin\tau_n) > 0$, and $\dot\zzz(t_*+t) = zte^{-it}$, hence Im\,$\dot\zzz(t'-0) < 0$, and so, the there are no grazing and no transition to sliding at $t'$.

(d) Let $(\zzz(t'), \dot\zzz(t')) \in \Hg = \RRR \times \RRR^-$ and assume that $z \in \RRR^+_0$. After making identifications $z = \zzz(t_*) =: r_n$,\, $\dot\zzz(t_*+0) =: x_n + iy_n$,\, $\dot\zzz(t'-0) =: x_{n+1} - iy_{n+1}$,\, $t'-t_* =: \tau_n$ we get $x_{n+1} < 0$,\, $y_{n+1} = 0$,\, $r_n \ge 0$. By claim (b) of this lemma we have $0 < \tau_n \le \pi$, hence $\sin\tau_n - \tau_n \cos\tau_n > 0$. First, assume that (i) $\tau_n \ne \pi$. By \eqref{e-y} one has $(r_n + y_n)\, \dfrac{\tau_n}{\sin\tau_n} - r_n\, \dfrac{\sin\tau_n}{\tau_n} = 0$, hence $r_n + y_n = r_n \sin^2 \tau_n/\tau_n^2$, and using \eqref{e-x} one obtains
$$
x_{n+1} = \frac{r_n}{\tau_n^2}\, (\sin\tau_n - \tau_n \cos\tau_n) \ge 0,
$$
in contradiction with our assumption.

Now assume that (ii) $\tau_n = \pi.$ By claim (b) of this lemma, $r_n = z = 0$. Using \eqref{e-y1}, one obtains $y_{n+1} = -\pi x_n$, hence $x_n = 0$,  and using \eqref{e-x1} one gets $x_{n+1} =0$, in contradiction with our assumption.

Let now $(\zzz(t'), \dot\zzz(t')) = (r,0) \in \Hbs = \RRR^+ \times \{0\}$. That is, $r := \zzz(t') > 0$ and $\dot\zzz(t') = 0$. It follows that $\zzz(t' + t) = r(1 + it) e^{-it}$ for $t \in [t_* - t',\, 0]$. Denote $s = t' - t_* > 0$, then $\zzz(t_*) = r(1 - is) e^{is} = r(\cos s + s\sin s) + ir(\sin s - s\cos s)$; thus, $s$ is the smallest positive value satisfying $\sin s - s\cos s = 0$. One concludes that $s > \pi$, hence by the claim (b) of this lemma, $z \not\in \RRR^+_0$.
\end{proof}


\begin{cor}\label{cor sequence}
There is a strictly monotone increasing sequence $\{ t_n \}$ of real values (instants of hits), which may be either infinite, $t_* < t_1 < t_2 < \ldots$, or finite, $t_* < t_1 < t_2 < \ldots < t_m\, (m \ge 1)$, and there is a motion $(\zzz(t), \dot\zzz(t))$ on $[t_*,\, \sup_n t_n)$ with the initial conditions $(\zzz(t_*), \dot\zzz(t_*)) = (z, \check{z})$ satisfying $\zzz(t) \in \CCC^+$ for $t \in (t_*,\, \sup_n t_n) \setminus \{ t_n \}$ and $r_n = \zzz(t_n) \in \RRR,\, \forall n$. One has $(\zzz(t), \dot\zzz(t)) \in \Hb$ for $t \in (t_*,\, \sup_n t_n)$, and $(\zzz(t_m), \dot\zzz(t_m)) \in \Hbs$ if the sequence is finite (the latter means that the billiard motion terminates at $t_m$). 
\end{cor}

\begin{zam}\label{zbs}
If there are finitely many hits, that is, the billiard motion terminates in $\Hbs$ at the instant $t_m$, the motion can be extended to $[t_m,\, +\infty)$ by $$
(\zzz(t), \dot\zzz(t)) = r_m (\cosh(t-t_m), \sinh(t-t_m)), \quad t \ge t_m.
$$
That is, the motion on $[t_*,\, +\infty)$ is mixed: first billiard, then sliding.
\end{zam}

\begin{proof}
The proof of Corollary \ref{cor sequence} is obtained by repeated application of Lemma \ref{l ini}.
\end{proof}

Consider a sequence of successive instants of hits, $t_1 < t_2 < \ldots$, as described in Corollary \ref{cor sequence}. 

\begin{lemma}\label{l2}
(a) The sequence $r_n = \zzz(t_n)$,\, $n \ge 1$ is strictly monotone increasing.

(b) If $r_{1} > 0$ then the sequence of time intervals $\tau_n = t_{n+1} - t_n$,\, $n \ge 1$ is strictly monotone decreasing, and $x_n = \text{\rm Re}\, \dot\zzz(t_n) > 0$ and $y_n = \text{\rm Im}\, \dot\zzz(t_n+0) > 0$ for $n \ge 2$.

(c) If $(\zzz(t_n), \dot\zzz(t_n)) \in \Hg$ (that is, grazing impact takes place) for some $n$ then $\zzz(t_k) < 0$ for $k < n$ and $\zzz(t_k) > 0$ for $k > n$.
It follows that if $r_{n} \ge 0$, no grazing occurs at the subsequent hits $k > n$. Additionally, grazing cannot occur at two consecutive hits, say $n$ and $n+1$.

(d) If $(z, \check{z}) \in \Hsb$, the sequence $\{ t_n \}$ is infinite, and there is no grazing.
\end{lemma}

\begin{proof}
(a) Let $r_{n} > 0$. By claim (b) of Lemma \ref{l ini},\, $0 < \tau_{n} < \pi$, hence $\tau_{n}/\sin\tau_{n} > 1$, and using \eqref{rn2} and taking into account that $y_{n} \ge 0$, one obtains $r_{n+1} > r_{n}$. Applying inductively this argument, one concludes that the sequence $r_{n},\, r_{n+1}, \ldots$ is strictly monotone increasing.

Now suppose that $r_{k} < 0$. Considering the motion with reversed time, according to Remark \ref{z invert}, we see that $-r_{k},\, -r_{k-1}, \ldots, -r_1$ are points of hits for this motion. We have proved in the previous paragraph that this sequence is strictly monotone increasing, and therefore, so does the sequence $r_1, \ldots, r_{k}$.

It remains to note that 0 cannot repeatedly appear in the sequence $r_1, r_2, \ldots$. It follows that the sequence $r_n$ is strictly monotone increasing.

(b) For all $n \ge 1$ we have $0 < \tau_n < \pi$, and by \eqref{delta}, the function
$$
g_{n+1}(\tau) = (r_{n+1} + y_{n+1})\tau \cos\tau - (r_{n+1} + x_{n+1} \tau) \sin\tau
$$
is positive for $0 < \tau < \tau_{n+1}$ and equals zero for $\tau = \tau_{n+1}$. Using \eqref{rn2}, \eqref{e-x}, and \eqref{e-y}, one obtains
$$
g_{n+1}(\tau_n) = \left( \frac{2( r_n + y_n)\tau_n}{\sin\tau_n} - \frac{r_n \sin\tau_n}{\tau_n} \right) \tau_n \cos\tau_n
- \left( \frac{2( r_n + y_n)\tau_n}{\sin\tau_n}  - \frac{r_n \cos\tau_n}{\tau_n} \tau_n \right) \sin\tau_n
$$ $$
= - \frac{2( r_n + y_n)\tau_n}{\sin\tau_n} (\sin\tau_n - \tau_n \cos\tau_n) < 0,
$$
hence $\tau_{n+1} < \tau_n.$

Further, utilizing \eqref{e-x}, for $n \ge 1$ we find
$$
x_{n+1} \ge \frac{r_n}{\sin\tau_n} - \frac{r_n \cos\tau_n}{\tau_n} = r_n\, \frac{2\tau_n - \sin(2\tau_n)}{2\tau_n \sin\tau_n} > 0.
$$

Consider the inclusion
$$(\zzz(t_n), \dot\zzz(t_n)) \in \Hb \cup \Hbs = (\CCC^+ \times \CCC) \cup \big( (\RRR \times (\CCC \setminus \RRR)) /\sim \big) \cup (\RRR \times \RRR^-) \cup \big( \RRR^+ \times \{ 0 \} \big).$$
For $n \ge 2$ we have $r_n = \zzz(t_n) \in \RRR^+$ and $x_n = \text{\rm Re}\, \dot\zzz(t_n) > 0$, hence $(\zzz(t_n), \dot\zzz(t_n)) \in  (\RRR \times (\CCC \setminus \RRR)) /\sim$, and so, $y_n > 0$.

(c) Let now $(\zzz(t_n), \dot\zzz(t_n)) \in \Hg$. By claim (d) of Lemma \ref{l ini},\, $\zzz(t_{n-1}) < 0$. For the motion with reversed time, grazing takes place at the instant $-t_n$ at $-\zzz(t_n)$, and the impact prior to grazing is at the instant $-t_{n+1}$ at the point $-\zzz(t_{n+1})$, hence $-\zzz(t_{n+1}) < 0$.

Let grazing  occur at the hits $n$ and $n+1$; then $y_n = y_{n+1} = 0$, $x_n < 0$, $x_{n+1} < 0$, $r_n \le 0$, and $r_{n+1} \ge 0$. Consider two cases: $\tau_n \ne \pi$ and $\tau_n = \pi$. In the former case by \eqref{e-y} we have
$$
0  =  r_n \Big( \frac{\tau_n}{\sin\tau_n} - \frac{\sin\tau_n}{\tau_n} \Big)
$$
hence $r_n = 0$, and using \eqref{e-x} we obtain the impossible equation $x_{n+1} = 0$. In the latter case, $\tau_n = \pi$, using \eqref{e-y1} we obtain the impossible equation $0 = -(r_n + x_n \tau_n)$.

(d) Let $(z, \check{z}) \in \Hsb$. By claim (c) of Lemma \ref{l ini},\, $r_1 > 0$, and there is no grazing or passage to sliding at $t_1$, that is, $(\zzz(t_1), \dot\zzz(t_1)) \not\in \Hg \cup \Hbs$. Further, for any $n \ge 2$,\, $r_{n-1} \ge \ldots \ge r_1 > 0$, and by claim (d) of Lemma \ref{l ini},\, $(\zzz(t_n), \dot\zzz(t_n)) \not\in \Hg \cup \Hbs$. This means that there is neither grazing nor passage to sliding.
\end{proof}

\begin{cor}\label{cor1}
Let $r_{1} > 0$; then for $n \ge 2$ we have $a_n > 0$,\, $b_n > 1$, and $\tau_n < \pi/2$.
\end{cor}

\begin{proof}
Indeed, the two former inequalities follow from claim (b) of Lemma \ref{l2}. By \eqref{del2}, $\cos\tau_n = \dfrac{(1 + a_{n} \tau_{n})\sin\tau_n}{b_n\tau_n} > 0$, hence $\tau_n < \pi/2$.
\end{proof}

From now on in this section, we assume that the motion under consideration has infinitely many impacts.

\begin{lemma}\label{l1}
Let $r_{1} > 0$.

(a) The time intervals $\tau_n,\, n \ge 1$ strictly monotonically converge to 0:\\ $\tau_n \downarrow 0$ as $n \to \infty$.

(b) $\sum_{n\ge 1} \tau_n = \infty$.
\end{lemma}

\begin{proof}
(a)
For $n \ge 1$ one has $b_{n+1} > 1$, and according to \eqref{iterB}, $b_{n+1} < 2$. Using  \eqref{del2}, for $n \ge 2$ one has
$1 + a_{n} \tau_{n} = {b_n}\, \dfrac{\tau_n}{\tan\tau_n} < 2.$ Thus,
$$
 1 < b_n < 2, \quad 0 < a_n < \frac{1}{\tau_n}, \quad \text{and} \quad \frac{1 + a_n \tau_n}{b_n} < 1 \quad \text{for} \ \, n \ge 2.
$$

By claim (b) of Lemma \ref{l2}, the sequence $\tau_n$ is strictly monotone decreasing, and therefore, converges to a value $c \ge 0$. It remains to prove that $c = 0$.

Assume the contrary, that is, $0 < c < \pi/2$; then by \eqref{del2}, $\dfrac{1 + a_n \tau_n}{b_n} \to \dfrac{c}{\tan c} < 1$ as $n \to \infty$. Since $b_n$ is bounded, there exists a partial limit $\lim_{k\to\infty} b_{n_k} =: \bt$. Using \eqref{iterA} and \eqref{iterB}, one obtains
\beq\label{e5}
\frac{1 + a_{n_k}\tau_{n_k-1}}{b_{n_k}} =
\frac{2 - \dfrac{\cos\tau_{n_k-1} \sin\tau_{n_k-1}}{b_{n_k-1} \tau_{n_k-1}}}{2 - \dfrac{1}{b_{n_k-1}} \Big( \dfrac{\sin\tau_{n_k-1}}{\tau_{n_k-1}} \Big)^2}.
\eeq
Since $a_n$ is bounded, one has
$$
\lim_{k\to\infty} \frac{1 + a_{n_k}\tau_{n_k-1}}{b_{n_k}} =  \lim_{k\to\infty} \frac{1 + a_{n_k}\tau_{n_k}}{b_{n_k}}
= \lim_{k\to\infty} \frac{\tau_{n_k}}{\tan \tau_{n_k}} = \frac{c}{\tan c},
$$
and passing to the limit $k \to \infty$ in \eqref{e5}, we get
$$
\frac{c}{\tan c} =
\frac{2 - \dfrac{\cos c \sin c}{\bt c}}{2 - \dfrac{1}{\bt} \Big( \dfrac{\sin c}{c} \Big)^2},
$$
and therefore, $c = \tan c$. This equation does not have solutions for $c \in (0,\, \pi/2)$. We come to a contradiction.

(b) Here we use the auxiliary function $h(t) = h_{n+1}(t) = \dfrac{t}{\tan t} - \dfrac{1+a_{n+1}t}{b_{n+1}}$. By \eqref{del2},
$$
h(\tau_{n+1}) = \dfrac{\tau_{n+1}}{\tan \tau_{n+1}} - \dfrac{1+a_{n+1}\tau_{n+1}}{b_{n+1}} = 0,
$$
and $h(\tau_n) < 0$, and using \eqref{iterA}, \eqref{iterB}, and the inequality $2 - \dfrac{1}{b_n} \Big( \dfrac{\sin\tau_n}{\tau_n} \Big)^2 = b_{n+1}> 1$ (see Corollary \ref{cor1}), we get
\beq\label{h}
h(\tau_n) = \dfrac{\tau_n}{\tan \tau_n} - \frac{2 - \dfrac{\cos\tau_n \sin\tau_n}{b_n \tau_n}}{2 - \dfrac{1}{b_n} \Big( \dfrac{\sin\tau_n}{\tau_n} \Big)^2}
 = \frac{2}{2 - \dfrac{1}{b_n} \Big( \dfrac{\sin\tau_n}{\tau_n} \Big)^2}\, \Big( \dfrac{\tau_n}{\tan \tau_n} - 1 \Big)
 \eeq
  $$
> 2 \Big( \dfrac{\tau_n}{\tan \tau_n} - 1 \Big) = -\frac{2\tau_n^2}{3}\, (1 + o(1)) \quad \text{as} \ \ n \to \infty.
$$
On the other hand, $h'(t) \le -\dfrac{a_{n+1}}{b_{n+1}} < -\dfrac{a_{n+1}}{2}$ for all $t$. Further, by \eqref{iterAA}, using that $\tau_n < \pi/2$ for $n \ge 2$, one obtains $$\dfrac{1}{a_{n+1}} < \tau_n + \dfrac{1}{a_n}.$$

Assume that $\sum_{n=1}^\infty \tau_n = c < \infty$. Then for $n \ge 2$,\, $\dfrac{1}{a_{n}} < \dfrac{1}{a_{2}} + \sum_{i=2}^{n-1} \tau_i \le \dfrac{1}{a_{2}} + c$, hence $\dfrac{a_n}{2} \ge \dfrac 12\, \Big( \dfrac{1}{a_{2}} + c \Big)^{-1} =: c_1$. Thus, $h'(t) = h_{n+1}'(t) < -c_1$, and
$$
 \tau_n - \tau_{n+1} \le \frac{|h(\tau_{n}) - h(\tau_{n+1})|}{\inf_{t} |h'(t)|} \le \frac{2}{3c_1}\, \tau_n^2\, (1 + o(1))  \quad \text{as} \ \ n \to \infty.
$$
It follows that for $c_2 > 2/(3c_1)$ and for $n$ sufficiently large,
$$
\tau_{n+1} \ge \tau_{n} - c_2 \tau_{n}^2 \ \ \Longrightarrow \ \ \frac{1}{\tau_{n+1}} \le \frac{1}{\tau_{n}} + c_2 (1 + o(1)), \ \ \ n \to \infty,
$$
and so, $\tau_{n} \ge \dfrac{1}{c_2 n} (1 + o(1))$, and $\sum_{n=1}^\infty \tau_{n} = \infty$. We come to a contradiction with our assumption.
\end{proof}

\begin{lemma}\label{l3}
Let $r_1  > 0$; then $a_n \to 1$ and $b_n \to 1$ as $n \to \infty$.
\end{lemma}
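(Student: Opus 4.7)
The plan is to prove the two convergences $b_n \to 1$ and $a_n \to 1$ in sequence, leveraging the iterative formulas \eqref{iterA} and \eqref{iterB} together with Lemma \ref{l1} ($\dlt_n \downarrow 0$) and Lemma \ref{l2} ($\sum \dlt_n = \infty$). Setting $\beta_n := b_n - 1 \in (0,1)$, a Taylor expansion of the defining equation \eqref{e0} as $\dlt_n \to 0$ yields the preliminary identity
\[
\beta_n = a_n \dlt_n + (1+\beta_n)\dlt_n^2/3 + O(\dlt_n^4),
\]
which ties together the two quantities we wish to control.

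For $b_n \to 1$, I would rewrite \eqref{iterB} as $\beta_{n+1}(1+\beta_n) = \beta_n + \eta_n$, where $\eta_n := 1 - (\sin\dlt_n/\dlt_n)^2$ is strictly positive and monotonically decreases to $0$ together with $\dlt_n$. From the identity $\beta_n - \beta_{n+1} = (\beta_n^2 - \eta_n)/(1+\beta_n)$, the iteration strictly decreases $\beta_n$ whenever $\beta_n^2 > \eta_n$. Assuming for contradiction $\liminf \beta_n = \beta_* > 0$ and passing to a subsequence $\beta_{n_k} \to \beta_*$, the next iterate satisfies $\beta_{n_k+1} \to \beta_*/(1+\beta_*) < \beta_*$, contradicting the liminf; hence $\liminf \beta_n = 0$. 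The crude bound $\beta_{n+1} \leq \beta_n + \eta_n$ with $\eta_n \to 0$ upgrades this to $\lim \beta_n = 0$, because once $\beta_n$ is close to $0$ the contraction $\beta \mapsto \beta/(1+\beta)$ dominates the vanishing perturbation.

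For $a_n \to 1$, I would substitute the preliminary identity into \eqref{iterA} and use $b_n \to 1$ to obtain the clean recurrence
\[
a_{n+1} = a_n + \dlt_n(1 - a_n^2) + O(\dlt_n^2),
\]
which is a perturbed Euler scheme for the ODE $\dot a = 1 - a^2$, whose equilibrium $a = 1$ is globally attracting on $(0,\infty)$. With the Lyapunov function $V_n := (a_n - 1)^2$ one computes $V_{n+1} \leq V_n\bigl(1 - 2\dlt_n(1+a_n)\bigr) + O(\dlt_n^2)$; since $a_n > 0$ by Corollary \ref{cor1} and $\sum \dlt_n = \infty$, the product $\prod_n(1 - 2\dlt_n(1+a_n))$ vanishes and the homogeneous part of $V_n$ is driven to $0$.

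The hard part will be twofold: (a) establishing uniform boundedness of $a_n$ (equivalently $\beta_n = O(\dlt_n)$) in order to validate the Taylor error terms used throughout; and (b) handling the inhomogeneous $O(\dlt_n^2)$ term in the Lyapunov iteration without a priori knowledge that $\sum \dlt_n^2 < \infty$. For (a), whenever $\beta_n \gg \dlt_n$ the quadratic term $\beta_n^2$ dominates $\eta_n = O(\dlt_n^2)$, so the contraction in the $\beta$-iteration halves $\beta_n$ in one step and forces $\beta_n$ into the region $\beta_n = O(\dlt_n)$ in finitely many steps. For (b), since $\dlt_n \to 0$ one has $\dlt_n^2 \leq \varepsilon \dlt_n$ for $n$ large and any fixed $\varepsilon > 0$; splitting the Duhamel representation of $V_n$ into early and late indices and exploiting the exponential decay $\exp(-2(t_n - t_k))$ of the homogeneous solution then yields $V_n \to 0$, completing the proof.
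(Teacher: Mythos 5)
Your architecture is sound and both halves genuinely diverge from the paper's proof, so the comparison is worth recording. For $b_n\to 1$ the paper works with $\limsup b_n$: from \eqref{iterB} it derives $b_n\ge b_{n+1}-\xi_n$ with $\xi_n\to 0$, extracts a subsequence along which both $b_{n_k}$ and $b_{n_k-1}$ converge to the same limit $\bt$, and solves the fixed-point equation $\bt=2-1/\bt$. Your exact rewriting $\beta_{n+1}=(\beta_n+\eta_n)/(1+\beta_n)$ with $\eta_n=1-(\sin\dlt_n/\dlt_n)^2$, the liminf contradiction, and the upgrade via $\beta_{n+1}\le\beta_n+\eta_n$ is an equally valid and arguably more transparent route; both use only $\dlt_n\to 0$ and $b_n>1$. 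For $a_n\to 1$ the paper argues on the sign of $\Del a_n$: from \eqref{e8} it gets $\Del a_n<\dlt_n\bigl(1-\frac{a_n^2}{1+a_n\dlt_n}\bigr)$, so $a_n>1+\ve$ forces a decrease of order $\ve\dlt_n$, and with $\sum\dlt_n=\infty$ this yields $\limsup a_n\le 1$; a symmetric lower estimate gives $\liminf a_n\ge 1$. Your Lyapunov--Duhamel scheme for $V_n=(a_n-1)^2$ is a different, more systematic packaging of the same two ingredients (the attracting drift of $\dot a=1-a^2$ and $\sum\dlt_n=\infty$), and your handling of the inhomogeneous term via $\dlt_n^2\le\ve\dlt_n$ together with the exponential weight is correct.

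The one step you must repair is the boundedness of $a_n$ (your item (a)), on which all your Taylor remainders and the Lyapunov inequality depend. The mechanism you sketch is quantitatively wrong: from $\beta_{n+1}=(\beta_n+\eta_n)/(1+\beta_n)$ one only gets $\beta_{n+1}\ge\beta_n/(1+\beta_n)>\beta_n/2$, and once $\beta_n$ is small the decrement is $\beta_{n+1}\approx\beta_n-\beta_n^2$; so $\beta_n$ is certainly not halved in one step, and it does not enter the band $O(\dlt_n)$ ``in finitely many steps'' by that reasoning --- it decays only like $1/n$, which would have to be compared with the (at this stage unknown) decay rate of $\dlt_n$. The fix is cheap and already contained in your own recurrence: without any boundedness assumption, \eqref{e8} gives $a_{n+1}<\frac{a_n}{1+a_n\dlt_n}+\dlt_n$, hence $\Del a_n<\dlt_n\bigl(1-\frac{a_n^2}{1+a_n\dlt_n}\bigr)$; since $a_n\dlt_n<1$ (established in the proof of Lemma \ref{l1}), $a_n\ge 2$ implies $\frac{a_n^2}{1+a_n\dlt_n}>2$ and so $\Del a_n<-\dlt_n$. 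Combined with $\sum\dlt_n=\infty$ and the universal bound $\Del a_n<\dlt_n$, this confines $a_n$ below an absolute constant for all large $n$, after which your expansions and the Gronwall iteration are legitimate.
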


\begin{proof}
Using that $\tau_n\to 0$ as $n\to\infty$,\, $b_n > 1$, and taking \eqref{iterB} into account one obtains
$$b_{n+1} = 2-\dfrac{1}{b_n}+\xi_n,$$
where $\xi_n\to 0$ as $n\to \infty$. Hence we have
\beq\label{e7}
b_{n} \ge 2 - \dfrac{1}{b_n} = b_{n+1} - \xi_n.
\eeq

Let $\bt = \lim_{k\to\infty} b_{n_k}$ be the upper partial limit of $b_n$.
It follows from \eqref{e7} that
$$\beta \ge \limsup_{k\to \infty} b_{n_k - 1} \ge \liminf_{k\to \infty} b_{n_k - 1} \ge \lim_{k\to\infty} (b_{n_k}-\xi_{n_k})=\beta. $$
Hence, $\lim_{k\to\infty} b_{n_k - 1}$ exists and coincides with $\bt$. Passing to the limit $k\to\infty$ in the equality
$$b_{n_k} = 2 - \dfrac{1}{b_{n_k - 1}}+ \xi_{n_k - 1},$$
one finds $\bt = 2 - {1}/{\bt}$, whence $\bt = 1$. It follows that $\lim_{n\to\infty} b_n = 1$.

Since by \eqref{del2}, 
$$b_n = \dfrac{(1 + a_n \tau_n) \sin \tau_n}{\tau_n \cos\tau_n},$$ 
making this substitution in \eqref{iterA} and using that by Corollary \ref{cor1}, $a_n > 0$ for $n \ge 2$, after some algebra one obtains
\beq\label{e8}
a_{n+1} = a_n\, \frac{\cos^2 \tau_n}{1 + a_n \tau_n} + \tau_n \Big( \frac{\sin \tau_n}{\tau_n} \Big)^2 <  \frac{a_n}{1 + a_n\tau_n} + \tau_n.
\eeq
Hence we have
\begin{equation}\label{addon}
\Del a_n := a_{n+1} - a_n < \tau_n \Big( 1 - \frac{a_n^2}{1 + a_n \tau_n} \Big) < \tau_n.
\end{equation}
Since $\tau_n \to 0$, for any $0 < \ve < 1$ there exists $n_0 = n_0(\ve)$ such that for all $n \ge n_0$,\, $\tau_n < \ve$, and therefore, for $a > 1 + \ve$ holds 
$$1 - \dfrac{a^2}{1 + a\tau_n}< 1 - \dfrac{a^2}{1 + a\varepsilon}<1 - \dfrac{(1+\varepsilon)^2}{1+\varepsilon+\varepsilon^2} = 
-\dfrac{\varepsilon}{1+\varepsilon+\varepsilon^2}\le 
-\dfrac{\ve}{3}.$$
Here we use the fact that the function  
$$1 - \dfrac{a^2}{1 + a\varepsilon}$$
is strictly monotone decreasing with respect to $a>0$ provided that $\varepsilon$ is positive. It follows that
\beq\label{if}
\text{If} \ \ n \ge n_0 \quad \text{and} \ \ a_n > 1 + \ve \quad \text{then, by Eq.\, \eqref{addon},} \ \  \Del a_n < -\frac{\ve}{3}\, \tau_n.
\eeq

Let us prove that $a_n < 1 + 2\ve$ for $n$ sufficiently large. First, for some $n_1 \ge n_0$ holds $a_{n_1} \le 1 + \ve$; otherwise the sequence $a_n,\, n \ge n_0$ is monotone decreasing with the increments $\Del a_n < -({\ve}/{3})\, \tau_n$, and therefore, tends to $-\infty$.

Second, for all $n > n_1$ the inequality holds $a_n < 1 + 2\ve$. Otherwise let $n_2 > n_1$ be the smallest value for which $a_n \ge 1 + 2\ve$; we have $\Del a_{n_2-1} > 0$, and therefore, by \eqref{if}, $a_{n_2-1} \le 1 + \ve$. On the other hand, $\Del a_{n_2-1} < \tau_{n_2-1} < \ve$, hence $a_{n_2} < 1 + 2\ve$, in contradiction with our assumption.

It follows that $\limsup a_n \le 1$.

Further, from \eqref{e8} one derives
\beqo\label{e9}
\Del a_n = -a_n\, \frac{\sin^2 \tau_n}{1 + a_n \tau_n} + \tau_n \left[ \Big( \frac{\sin \tau_n}{\tau_n} \Big)^2 - \frac{a_n^2}{1 + a_n \tau_n} \right]
\eeqo
\beq\label{e10}
> \tau_n \left[ -a_n\, {\tau_n} + \Big( \frac{\sin \tau_n}{\tau_n} \Big)^2 - a_n^2 \right].
\eeq
Let us show that for any $0 < \ve < 1$ there exist infinitely many values of $n$ for which $a_n > 1 - \ve$. Indeed, otherwise all $a_n$ for $n$ sufficiently large lie in $[0,\, 1-\ve]$, and the sum over $n$ of the right hand sides in \eqref{e10} is greater than $\ve \sum_{n=1}^\infty \tau_n (1 + o(1))$, and therefore, diverges to $+\infty$. It follows that $a_n \to +\infty$, which is impossible.

By \eqref{e10}, since both sequences $a_n$ and $\tau_n$ are bounded, there exists a constant $c > 0$ such that  $\Del a_n \ge -c \tau_n$.
Fix $0 < \ve < 1$. Since $\tau_n \to 0$, by \eqref{e10},  there exists $m$ such that for all $n \ge m$, the inequality $0 \le a_n \le 1 - \ve$ implies $\Del a_n \ge ({\ve}/{2})\, \tau_n > 0$. Choose a subsequence $m < n_1 < n_2 < \ldots n_k < \ldots$ such that $a_{n_k} > 1 - \ve$ for all $k$.

Let $a_{s_k}$ be the smallest value among $\{a_{n_k+1},\, a_{n_k+2}, \ldots, a_{n_{k+1}}\}$. Take $k$ such that $n_k \ge m$. If $a_{s_k} \le 1 - \ve$ then $a_{s_k-1} > 1 - \ve$. Indeed, if $s_k = n_k+1$, this is obvious, and if $s_k \ge n_k+2$ then $\Del a_{s_k-1} \le 0$, hence $a_{s_k-1} > 1 - \ve$. We have
$$a_{s_k} = a_{s_k-1} + \Del a_{s_k-1} > 1 - \ve - c\tau_{s_k-1}.$$
Since $\tau_{s_k-1}$ converges to zero, we conclude that the lower partial limit of $a_n$ is $\ge 1 - \ve$, and since $\ve$ can be made arbitrarily small, $\liminf a_n \ge 1$. Lemma \ref{l3} is proved.
\end{proof}

\begin{lemma}\label{l4}
Let $r_1 > 0$. We have $\tau_n = \dfrac{3}{2n} (1 + o(1))$,\, $b_n = 1 + \dfrac{3}{2n} (1 + o(1))$, and $\dfrac{r_{n+1}}{r_n} = 1 + \dfrac{3}{2n} (1 + o(1))$ as $n \to \infty$.
\end{lemma}

\begin{proof}
Using that $a_n = 1 + o(1)$ and $\tau_n \to 0$ as $n \to \infty$, by formula \eqref{del2} we find
$$
\bt_n := b_n - 1 = (1 + a_n \tau_n)\,  \frac{\tan\tau_n}{\tau_n} - 1 = \tau_n + o(\tau_n),
$$
Substituting this relation in \eqref{iterB}, one obtains
$$
\bt_{n+1}  = 1 - \frac{1}{1+\bt_n}\, \Big( \frac{\sin\tau_n}{\tau_n} \Big)^2=
1 - \big[ 1 - \bt_n + \bt_n^2 + o(\bt_n^2) \big] \Big[ 1 - \frac{\tau_n^2}{3} + o(\tau_n^2) \Big]
$$
$$
= \bt_n - \bt_n^2 + o(\bt_n^2) + \frac{1}{3} \tau_n^2 + o(\tau_n^2)\ =\ \bt_n - \frac{2}{3} \tau_n^2 + o(\tau_n^2).
$$
Thus,
$$
\frac{1}{\bt_{n+1}} = \frac{1}{\bt_{n}} \Big( 1 - \frac{2}{3} \frac{\tau_n^2}{\bt_n} + \frac{o(\tau_n^2)}{\bt_n} \Big)^{-1}
= \frac{1}{\bt_{n}} + \frac{2}{3} + o(1), \quad n \to \infty.
$$
It follows that $\dfrac{1}{\bt_{n}} = \dfrac 23\, n\, (1 + o(1))$, hence
$$
\bt_n = \frac{3}{2n}\, (1 + o(1)) \quad \text{and} \quad \tau_n = \frac{3}{2n}\, (1 + o(1)), \quad n \to \infty.
$$

Finally, by \eqref{rn4} we have $\dfrac{r_{n+1}}{r_n} = b_n\, \dfrac{\tau_n}{\sin\tau_n} = 1 + \dfrac{3}{2n}\, (1 + o(1))$.
Lemma \ref{l4} is proved.
\end{proof}

\begin{lemma}\label{l Im}
Let $r_1 > 0$. We have $\max \{ {\rm Im}\, \zzz(t),\, t_n \le t \le t_{n+1} \} = \dfrac{9}{16n^2}\, r_n\, (1 + o(1))$ as $n \to \infty$.
\end{lemma}

\begin{proof}
At the maximum of ${\rm Im}\, \zzz$ we have ${\rm Im}\, \dot\zzz(t_n + s) = 0$. Using formula \eqref{anbn}, one comes to the equation $b_n -1-a_n s = (a_n +b_n s)\tan s$, and using the asymptotics of $a_n$,\, $b_n$, and $\tau_n$ obtained in Lemmas \ref{l3} and \ref{l4} (recall that $s < \tau_n$), one obtains 
$$s = \frac{3}{4n}\, (1 + o(1))$$ 
as $n \to \infty$. Using again formula \eqref{anbn}, one gets
$$
\max \{ {\rm Im}\, \zzz(t),\, t_n \le t \le t_{n+1} \} = {\rm Im}\, \zzz(t_n + s)
$$ $$
= r_n \cos s [b_n s - (1 + a_n s)\tan s] =  \frac{9}{16n^2}\, r_n\, (1 + o(1)).
$$
\end{proof}

In the following Lemma \ref {l5}, $r_1$ may be both positive and negative.

\begin{lemma}\label{l5}
There is $n_0$ such that $r_{n_0} = \zzz(t_{n_0}) > 0$; that is, a certain reflection is from the positive semi-axis.
\end{lemma}

\begin{proof}
If the sequence $\{ t_n \}$ is finite and terminates at $t_m$ with $(\zzz(t_m), \dot\zzz(t_m)) \in \Hbs$, there is nothing to prove, since $r_m \in \RRR^+$. Suppose now that the sequence is infinite.

Assume the contrary, that is, all hits in the infinite sequence $\{ t_n \}$ are at the negative semiaxis: $r_n < 0$ for all $n$. By claim (a) of Lemma \ref{l2}, the sequence $r_{n} = \zzz(t_n)$ is strictly monotone increasing. Considering the reversed motion and applying claims (b) and (c) of Lemma \ref{l2} and the latter equation in Corollary \ref{cor1}, we conclude that the sequence of time intervals $\tau_n$ is strictly monotone increasing and that $x_n > 0$,\, $y_n > 0$, and $0 < \tau_n < \pi/2$.  It follows that $0 < r_{n+1}/r_n < 1$, and by formula \eqref{rn2},
\beq\label{frac}
0 < \frac{r_n + y_n}{r_n}\, \frac{\tau_n}{\sin\tau_n} < 1.
\eeq
It follows that
$$r_n + y_n < 0.$$

Denote $\tau_* := \lim_{n\to\infty} \tau_n \in (0,\, \pi/2]$,\, $c_n := \Big( \dfrac{\sin\tau_n}{\tau_n} \Big)^2$, and $c_* := \Big( \dfrac{\sin\tau_*}{\tau_*} \Big)^2$; we have $c_n \downarrow c_*$,\, $2/\pi < c_n < 1$, and $2/\pi \le c_* < 1$.

Recall the notation
$$
a_n = \frac{x_n}{r_n} \quad \text{and} \quad b_n = \frac{r_n + y_n}{r_n}.
$$
One has $a_n < 0$ and $0 < b_n < 1$, and by \eqref{iterB} one has $b_{n+1} = 2 - \dfrac{c_n}{b_n}.$

Denote by $b_n^*$ the smallest value of $b$ solving the equation $b + \dfrac{c_n}{b} = 2$. Let us show that $b_n \le b_n^*$ for all $n$. Indeed, assume the contrary: for some $m$ holds $b_m > b_m^*$. Since $b_m \in (b_m^*,\, 1)$, we have $b_m + \dfrac{c_m}{b_m} < 2$, and so,
$$
b_{m+1}^* < b_m^* < b_m < 2 - \dfrac{c_m}{b_m} = b_{m+1},
$$
and repeating this argument, by induction one obtains that $b_{n}^* < b_{n}$ and $b_{n} < b_{n+1}$ for all $n \ge m$. Thus, the sequence $b_n,\, n \ge m$, is strictly monotone increasing, and since the sequence $c_n$ is strictly monotone decreasing and $b_{n+1} = 2 - \dfrac{c_n}{b_n} < 1$, one concludes that $b_n < c_n$, and
$$
b_{n+1} - b_n = 2 - \Big( b_n + \frac{c_n}{b_n} \Big) \ge 2 - \Big( b_n + \frac{c_m}{b_n} \Big) \ge 2 - \Big( b_m + \frac{c_m}{b_m} \Big) = b_{m+1} - b_m;
$$
the penultimate inequality is true since $b_m \le b_n$ and $b_m b_n < b_m < c_m$. Thus, the increments $b_{n+1} - b_n,\, n \ge m$, are greater than a positive constant, which contradicts the inequality $b_n < 1$,\, $\forall n$.

Now using that $b_n \le b_n^*$ prove that the sequence $b_n$ is monotone decreasing. Indeed, we have $b_n + \dfrac{c_n}{b_n} \ge 2$, hence
$$
b_{n+1} = 2 - \frac{c_n}{b_n} \le b_n.
$$
Denote by $b_*$ the limit of $b_n$; we have $b_* + \dfrac{c_*}{b_*} = 2$ and $0 < b_* < 1$.

By \eqref{del2}, the sequence $a_n$ converges to
$$a_* := \frac{b_*\cos\tau_*}{\sin\tau_*} - \frac{1}{\tau_*},
$$
and therefore, is bounded.

Recall that the function $h_n$ is defined as
$$
h_n(\tau) = \frac{\tau\cos\tau}{\sin\tau} - \frac{1 + a_n \tau}{b_n}.
$$
The sequence $\sup_{[\tau_n, \tau_{n+1}]} |h_n'(t)|$ is bounded, indeed,
$$
\sup_{[\tau_n, \tau_{n+1}]} |h_n'(t)| \le \sup_{[0, \pi/2]} |h_n'(t)| \le \frac{\pi}{2} + \sup_n \Big| \frac{a_n}{b_n} \Big| \le \frac{\pi}{2} + \frac{\sup_n |a_n|}{b_*} < \infty.
$$
Further, one has $h_n(\tau_n) = 0$ and $\tau_{n+1} - \tau_n \to 0$ as $n \to \infty$, hence
$$
|h_{n+1}(\tau_{n+1}) - h_{n+1}(\tau_n)| \le \sup_{[\tau_n, \tau_{n+1}]} |h_{n+1}'(t)|\, (\tau_{n+1} - \tau_n) \to 0.
$$
On the other hand, using \eqref{h}, one finds that the increments
$$
|h_{n+1}(\tau_n) - h_{n+1}(\tau_{n+1})| = |h_{n+1}(\tau_n)|
 = \frac{2}{2 - \dfrac{c_n}{b_n}}\, \Big(1 - \dfrac{\tau_n}{\tan \tau_n} \Big) > 1 - \dfrac{\tau_1}{\tan \tau_1},
 $$
are greater than a positive constant, and therefore, do not converge to 0. This contradiction proves the lemma.
\end{proof}

Let us summarize the results obtained in this subsection.

We look for a motion $(\zzz(t), \dot\zzz(t))$ with the initial condition $(\zzz(t_*), \dot\zzz(t_*)) = (z, \check{z}) \in \Hb \cup \Hsb$.

\begin{quote}

$\bullet$ If $(z, \check{z}) \in \Hb$ then either there is a billiard motion defined on $[t_*,\, +\infty)$ with infinitely many hits, or there is a billiard motion defined on $[t_*,\, t_{b\to s}]$ with finitely many hits that terminate in $\Hbs$ at the final instant $t_{b\to s}$.

$\bullet$ If $(z, \check{z}) \in \Hsb$ then there is a billiard motion defined on $[t_*,\, +\infty)$ with infinitely many hits.

$\bullet$ If $(z, \check{z}) \in \Hb$, grazing may occur at most once (say, at the instant $t_n$), and there are no other points of impact on $[0,\, \zzz(t_n)]$ or on $[\zzz(t_n),\, 0]$. If $(z, \check{z}) \in \Hsb$, grazing does not occur.

$\bullet$ The sequence $r_n =\zzz(t_n)$ is strictly monotone increasing. Assume that $r_n > 0$ for $n \ge n_0$; then the sequence of time intervals $\tau_n = t_{n+1} - t_n$ is strictly monotone decreasing, and additionally, $\tau_{n_0} < \pi$ and $\tau_n < \pi/2$ for $n \ge n_0 + 1$.

$\bullet$ If there are infinitely many hits, then
$$
\dfrac{r_{n+1}}{r_n} = 1 + \dfrac{3}{2n} (1 + o(1)), \ \tau_n = \dfrac{3}{2n} (1 + o(1)), \ a_n = 1 + o(1), \ b_n = 1 + \dfrac{3}{2n} (1 + o(1)),
$$
and \ $\max \{ {\rm Im}\, \zzz(t),\, t_n \le t \le t_{n+1} \} = \dfrac{9}{16n^2}\, r_n\, (1 + o(1)) \ \ \text{as} \ \ n \to \infty.$

\end{quote}

\subsection{Sliding and mixed motions, and uniqueness of solutions}

Now consider the case when $(z, \check{z}) \in \Hs \cup \Hbs \cup \{ (0,0) \}$. We are looking for a motion $(\zzz(t), \dot\zzz(t))$, $t \ge t_*$ with the condition $(\zzz(t_*), \dot\zzz(t_*)) = (z, \check{z})$.

Let $(z, \check{z}) = (r,x) \in \Hs =\RRR \times \RRR^+$. According to the proof of Lemma \ref{l notation}, there may be several cases. If (i) $|r| < x$ then a solution is given by $\zzz(t) = k\sinh(t-\tilde t)$ with appropriate $k > 0$ and $\tilde t$. If (ii) $r=x > 0$ or $r=-x< 0$ then a solution is $\zzz(t) = xe^{t-t_*}$ or $\zzz(t) = -xe^{-(t-t_*)}$, respectively. If (iii) $r > x > 0$ then a solution is $\zzz(t) = k\cosh(t-\tilde t)$ with appropriate $k > 0$ and $\tilde t < t_*$. Finally, if (iv) $r < -x < 0$ then a solution is given by $\zzz(t) = -k\cosh(t-\tilde t)$, $t_* \le t \le \tilde t$, with appropriate $k > 0$ and $\tilde t > t_*$. The corresponding motion $(\zzz(t), \dot\zzz(t))$ terminates at the instant $t = \tilde t$ at the point $(-k, 0) \in \Hsb$, and can be extended to $[\tilde t,\, +\infty)$, according to the previous subsection: for $t \ge \tilde t$ the motion will be billiard.

Let $(z, \check{z}) = (r,0) \in \Hbs$,\, $r > 0$. Again, according to the proof of Lemma \ref{l notation}, there is a sliding solution given by $\zzz(t) = r[1 + i(t-t_*)] e^{-i(t-t_*)}$, $t \ge t_*$.

Finally, a solution with the initial conditions $(0,0)$ is given by $\zzz(t) = 0$.

Thus, we have proved that there always exists a motion with $t \ge t_*$, whatever the initial conditions at $t=t_*$. Taking the motion with reversed time, according to Remark \ref{z invert}, one concludes that there also exists a motion with $t \le t_*$ with the condition $(z, \check{z})$ at $t=t_*$. It follows that for all $(z, \check{z}) \in \HHH$ there exists a full motion $(\zzz(t), \dot\zzz(t)),\, t \in \RRR$ satisfying $(\zzz(t_*), \dot\zzz(t_*)) = (z, \check{z})$, and there are 5 kinds of motion:
\vspace{1mm}

(I) Billiard motion; all points $(\zzz(t), \dot\zzz(t))$ lie in $\Hb$.
\vspace{1mm}

(II) sliding motion; all points $(\zzz(t), \dot\zzz(t))$ lie in $\Hs$.
\vspace{1mm}

(III) Billiard followed by sliding; there is $t_*$ such that all points $(\zzz(t), \dot\zzz(t)),\, t<t_*$, lie in $\Hb$, all points $(\zzz(t), \dot\zzz(t)),\, t>t_*$, lie in $\Hs$, and $(\zzz(t_*), \dot\zzz(t_*)) \in\Hbs$.
\vspace{1mm}

(IV) Sliding followed by billiard; here, vice versa, points $(\zzz(t), \dot\zzz(t))$ with $t<t_*$, lie in $\Hs$, points $(\zzz(t), \dot\zzz(t))$ with $t>t_*$, lie in $\Hb$, and $(\zzz(t_*), \dot\zzz(t_*)) \in\Hsb$.
\vspace{1mm}

(V) the ball is resting at the origin; $(\zzz(t), \dot\zzz(t)) = (0,0),\, \forall t$.
\vspace{1mm}

All corresponding trajectories $\{ (\zzz(t), \dot\zzz(t)) : t \in \RRR \}$ either are disjoint, or coincide, and their union is the phase space $\HHH$.

These motions are explicitly determined; let us call them {\it canonical} ones. Restrictions of canonical motions to time intervals will also be called canonical.

\begin{lemma}
All motions are canonical. It follows that the motion satisfying $(\zzz(t_*), \dot\zzz(t_*)) = (z, \check{z})$ for $(z, \check{z}) \in \HHH$ and $t_* \in \RRR$ is unique.
\end{lemma}

\begin{proof}
First note that each canonical billiard solution either does not contain grazing points or contains exactly one grazing point. Further, a solution is locally uniquely defined by a condition in $(\CCC^+ \times \CCC) \cup \big( (\RRR \times (\CCC \setminus \RRR)) /\sim \big) = \Hb \setminus \Hg$, and is canonical. It follows that for $(z, \check{z}) \in \Hb \setminus \Hg$, the condition $(\zzz(t_*), \dot\zzz(t_*)) = (z, \check{z})$ uniquely defines a canonical billiard motion defined either on $\RRR$, or on a semi-infinite open interval containing $t_*$, taking values in $\Hb \setminus \Hg$ (that is, without grazing). In the latter case, the state of the system at the endpoint of the interval belongs to $\Hg \cup \Hbs \cup \Hsb$.

Further, clearly, if a motion is sliding, that is, $\zzz \in \RRR$, on a time interval, then it is canonical on this interval.

Consider a motion $(\zzz(t), \dot\zzz(t))$, $t \in \RRR$. If all $\zzz(t)$ lies in $\RRR$ then the motion is canonical: it is either sliding of kind (II) or a resting point of kind (V). If, otherwise, a point $\zzz(t^0)$ lies in $\CCC^+$, then either the motion is canonical of kind (I) without grazing, or there is a semi-infinite open interval, say, $(-\infty,\, t')$ containing $t^0$ such that the motion is billiard canonical without grazing for $t < t'$ and $(\zzz(t'), \dot\zzz(t') \in \Hg \cup \Hbs$.

In the latter case, if all $\zzz(t),\, t>t'$ lie in $\RRR$ then the motion is sliding in $[t',\, +\infty)$, and by Lemma \ref{l not2}, $(\zzz(t'), \dot\zzz(t')) \in \Hbs$. Thus, the motion is canonical of kind (III). If, otherwise, a point $\zzz(t^1),\, t^1 > t'$ lies in $\CCC^+$ then, again, there is a semi-infinite open interval $(t'',\, +\infty)$ containing $t^1$ such that the motion $(\zzz(t), \dot\zzz(t))$ is billiard without grazing for $t>t''$ and $\zzz(t'') \in \Hg \cup \Hsb$.

No point in $(t',\, t'')$ can lie in $\CCC^+$, since otherwise one of the points $t'$ and $t''$ is included in an interval where the motion takes values in the set $\Hb \setminus \Hg$ (billiard without grazing), in contradiction with the fact that the states of the system at $t'$ and $t''$ do not belong to this set. It follows that the motion is sliding on $[t',\, t'']$; additionally, it takes values in $\Hg \cup \Hbs \cup \Hsb$ at the endpoints $t'$ and $t''$. This may happen only in the case when the interval degenerates to a point, $t' = t''$, and the system is in $\Hg$ at the instant $t'$, that is, the motion is canonical of kind (I).
\end{proof}

\section*{Acknowledgements}
The work of the first author was supported by Gda\'{n}sk University of Technology by the DEC 14/2021/IDUB/I.1 grant under the Nobelium - 'Excellence Initiative - Research University' program. The second author was supported by the Center for Research and Development in Mathematics and Applications (CIDMA) through the Portuguese Foundation for Science and Technology (FCT), within the projects UIDB/04106/2020, UIDP/04106/2020, and CoSysM3, ref. 2022.03091.PTDC. We thank the anonymous referee for their valuable comments. 

Declarations of interest: none.

Declaration of generative AI in scientific writing: no AI or AI-assisted technologies were used in the writing process.

\end{document}